\newtheorem{theorem}{Theorem}
\newtheorem{lemma}[theorem]{Lemma}
\begin{document}
\onehalfspace

\title{Factorially many maximum matchings\\
close to the Erd\H{o}s-Gallai bound\thanks{Research supported by research grant DIGRAPHS ANR-19-CE48-0013.}}
\author{St\'{e}phane Bessy$^1$ 
\and Johannes Pardey$^2$
\and Lucas Picasarri-Arrieta$^1$
\and Dieter Rautenbach$^2$}
\date{}

\maketitle
\vspace{-10mm}
\begin{center}
{\small 
$^1$ LIRMM, Univ Montpellier, CNRS, Montpellier, France\\
\texttt{$\{$stephane.bessy,lucas.picasarri-arrieta$\}$@lirmm.fr}\\[3mm] 
$^2$ Institute of Optimization and Operations Research, Ulm University, Ulm, Germany\\
\texttt{$\{$johannes.pardey,dieter.rautenbach$\}$@uni-ulm.de}}
\end{center}

\begin{abstract}
A classical result of Erd\H{o}s and Gallai determines the maximum size $m(n,\nu)$
of a graph $G$ of order $n$ and matching number $\nu n$.
We show that $G$ has factorially many maximum matchings 
provided that its size is sufficiently close to $m(n,\nu)$.\\[3mm]
{\bf Keywords:} Matching
\end{abstract}

\section{Introduction}

We consider finite, simple, and undirected graphs.
A {\it matching} in a graph $G$ is a set of pairwise disjoint edges,
and the {\it matching number $\nu(G)$} of $G$ is the largest size of a matching in $G$.
For a matching $M$ in $G$, let $V(M)$ be the set of vertices of $G$ that are incident with an edge in $M$;
the set $V(M)$ contains the vertices of $G$ that are {\it saturated} by $M$.

A classical result of Erd\H{o}s and Gallai, Theorem 4.1 in \cite{erga}, 
states that a graph $G$ of order $n$, size $m$, and matching number $\nu(G)$
such that $\nu(G)=\nu n$ for some $\nu\in \left[0,\frac{1}{2}\right]$ satisfies 
\begin{eqnarray}\label{e1}
m & \leq & m(n,\nu):=
\begin{cases}
\nu n(n-\nu n)+{\nu n\choose 2} & \mbox{, if $\nu \leq \frac{2}{5}-\frac{3}{5n}$, and}\\[3mm]
{2\nu n+1\choose 2} & \mbox{, if $\frac{2}{5}-\frac{3}{5n}\leq \nu\leq \frac{1}{2}$.}
\end{cases}
\end{eqnarray}
Furthermore, they showed that equality holds in (\ref{e1}) if and only if 
\begin{itemize}
\item[(i)] the complement $\overline{G}$ of $G$ is $K_{n-\nu n}\cup \overline{K_{\nu n}}$ 
for $\nu\leq \frac{2}{5}-\frac{3}{5n}$, and
\item[(ii)] $G$ is $K_{2\nu n+1}\cup \overline{K_{n-2\nu n-1}}$ for $\frac{2}{5}-\frac{3}{5n}\leq \nu\leq \frac{1}{2}$.
\end{itemize}
Recall that, for positive integers $n$ and $k$ with $k\leq n$, 
the {\it falling factorial} $n^{\underline{k}}$ is $n(n-1)\ldots (n-k+1)$.

The starting point here was the observation that the two extremal graphs in (i) and (ii) have 
$$(n-\nu n)^{\underline{\nu n}}
\,\,\,\,\,\,\,\,\,\,\mbox{ and }\,\,\,\,\,\,\,\,\,\,
\frac{(2\nu n+1)!}{(\nu n)! 2^{\nu n}}$$
maximum matchings, respectively.
Estimating quite roughly, it follows that, for positive $\nu$, the extremal graphs for (\ref{e1}) have 
between 
$\left\lceil 0.4n\right\rceil^{\underline{\left\lceil 0.5\nu n\right\rceil}}$
and 
$n^{\underline{\nu n}}$
maximum matchings.
We show that $G$ still has factorially many maximum matchings 
provided that $m(n,\nu)-m$ is sufficiently small.
Since $m(n,\nu)=\Theta(\nu n^2)$, it is natural to bound $m(n,\nu)-m$ in terms of $\nu$ and $n^2$.

The following is our first main result; all proofs are given in the next section.
\begin{theorem}\label{theorem1}
For every real $\nu$ with $\nu\in \left(0,\frac{1}{2}\right]$,
the following holds:
If $G$ is a graph of order $n$, size $m$, and matching number $\nu n$ such that 
$\left(\frac{\nu}{50}\right)^2 n\geq 1$ and $m\geq m(n,\nu)-\left(\frac{\nu}{50}\right)^2 n^2$,
then $G$ has at least 
$\left\lceil 0.1n\right\rceil^{\underline{\left\lceil 0.1\nu n\right\rceil}}$
maximum matchings.
\end{theorem}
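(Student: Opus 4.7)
The plan is to fix a maximum matching $M$ of $G$, write $X = V(M)$ and $U = V(G)\setminus X$ (so $|X| = 2\nu n$ and $|U| = n - 2\nu n$), and carry out a stability argument relative to the Erd\H{o}s--Gallai bound.

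The key structural fact I would use is the standard consequence of $M$ being maximum: since no $M$-augmenting path of length three exists, for each edge $e = vw \in M$ we have either (type (a)) one of $v,w$ has no neighbour in $U$ (and then the other has at most $|U|$ such neighbours), or (type (b)) $N(v) \cap U = N(w) \cap U = \{u\}$ for a single vertex $u \in U$. Together with the trivial bound $\binom{2\nu n}{2}$ on the number of edges inside $X$, this lets me relate $m$ to $\sum_{e = vw \in M}(|N(v) \cap U| + |N(w) \cap U|)$. I would then use the hypothesis $m \geq m(n,\nu) - (\nu/50)^2 n^2$ to pin down the structure of $M$ and of its neighbourhood in $U$, and split the proof according to which branch of $m(n,\nu)$ is active.

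In the bipartite branch $\nu \leq 2/5 - 3/(5n)$, the extremal graph is a clique joined to an independent set; near-extremality should force many matching edges $e = vw$ to be of type (a) with the non-trivial endpoint, call it $v$, having almost all of $U$ as neighbours. I would extract $k := \lceil 0.1\nu n\rceil$ such edges $e_i = v_i w_i$ with $|N(v_i) \cap U| \geq \lceil 0.1 n\rceil$. Every injection $\phi : \{1, \ldots, k\} \to U$ with $\phi(i) \in N(v_i) \cap U$ then gives a distinct maximum matching $(M \setminus \{e_1, \ldots, e_k\}) \cup \{v_i \phi(i) : 1 \leq i \leq k\}$, and a greedy count produces at least $\lceil 0.1 n\rceil^{\underline{k}}$ such $\phi$, matching the target bound.

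In the clique branch $\nu \geq 2/5 - 3/(5n)$, the extremal graph is $K_{2\nu n+1}$ together with isolated vertices; near-extremality should force a set $S \subseteq V(G)$ of size at least $2\nu n + 1$ with $G[S]$ nearly complete (at most $(\nu/50)^2 n^2$ missing edges). I would then count matchings of size $\nu n$ inside $G[S]$ by a sequential greedy argument: at step $i$ there are at least $\binom{|S| - 2(i-1)}{2} - (\nu/50)^2 n^2$ available edges, and dividing the resulting product by $(\nu n)!$ gives far more than $\lceil 0.1 n\rceil^{\underline{\lceil 0.1 \nu n\rceil}}$ matchings.

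The main obstacle will be the quantitative structural extraction from the edge deficit in each branch, especially near the transition $\nu = 2/5 - 3/(5n)$ where neither extremal shape dominates cleanly; the constant $(\nu/50)^2$ in the hypothesis is presumably calibrated so that both case arguments go through with comfortable slack.
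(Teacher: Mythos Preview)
Your case split---by which branch of $m(n,\nu)$ is active---is the wrong decomposition, and both branch arguments break down near the transition $\nu\approx\tfrac{2}{5}$. The difficulty is not merely quantitative slack: at $\nu=\tfrac{2}{5}$ the two extremal configurations have the same size, so for $\nu$ in a small window around $\tfrac{2}{5}$ a graph close to the \emph{bipartite} extremal shape satisfies the hypothesis while $\nu$ sits in the \emph{clique} range, and conversely. Concretely, in your bipartite branch the trivial bound $\binom{2\nu n}{2}$ on edges inside $X$ yields only
\[
m(n,\nu)-\binom{2\nu n}{2}\;=\;\nu\!\left(1-\tfrac{5}{2}\nu\right)n^{2}+O(n)
\]
edges between $X$ and $U$, and this collapses to $O(n)$ as $\nu\to\tfrac{2}{5}$; you then cannot extract even one matching edge with $U$-degree $\geq 0.1n$, let alone $\lceil 0.1\nu n\rceil$ of them. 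In your clique branch the assertion that near-extremality forces a set $S$ of size $2\nu n+1$ with $G[S]$ nearly complete is simply false: for $\nu$ slightly above $\tfrac{2}{5}$, the graph $K_{\nu n}$ joined to $\overline{K_{n-\nu n}}$ has edge deficit $\Theta\bigl((\nu-\tfrac{2}{5})n^{2}\bigr)\ll(\nu/50)^{2}n^{2}$, yet every $(2\nu n{+}1)$-set misses $\Theta(\nu^{2}n^{2})$ edges.

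The paper avoids this by replacing the $\nu$-based split with an \emph{intrinsic} one coming from the Gallai--Edmonds decomposition $V(G)=D\cup A\cup C$. Two parameters $x=\nu-|A|/n$ and $y=(|D|-k)/n$ (where $k$ is the number of components of $G[D]$) control the structure, and a short calculation shows that the hypothesis forces one of three degenerations: $|A|$ close to $\nu n$ (then a dense bipartite graph between $A$ and a transversal of the components of $G[D]$ supplies the matchings), or $|C|$ close to $2\nu n$ (then $G[C]$ is a near-clique with a perfect matching), or $G[D]$ has one component of order close to $2\nu n$ (then that component is a near-clique). The point is that the case you land in is determined by $G$, not by $\nu$, and near $\nu=\tfrac{2}{5}$ any of the three can occur.
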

For the sake of simplicity, we did not try to optimize the constants that appear in this statement,
which works over the full range $\left(0,\frac{1}{2}\right]$ of $\nu$.
Our purpose here is rather to illustrate the effect and present arguments and tools that allow to capture it.
In particular, the exact dependence of the minimum number of maximum matchings 
on the difference $m(n,\nu)-m$ remains a natural yet challenging open problem.

Our second main result gives a better bound provided that $\nu$ is sufficiently small.

\begin{theorem}\label{theorem2}
There are two functions 
$h_{\nu}:(0,1)\to \left(0,\frac{1}{2}\right]$ and 
$h_{\delta}:(0,1)\times \left(0,\frac{1}{2}\right]\to (0,1)$ with the following property:
If $\epsilon\in (0,1)$, 
$\nu\in \left(0,h_{\nu}(\epsilon)\right)$, and 
$G$ is a graph of order $n$, size $m$, and matching number $\nu n$ such that 
$h_{\delta}(\epsilon,\nu)n\geq 1$ and $m\geq m(n,\nu)-h_{\delta}(\epsilon,\nu)n^2$,
then $G$ has at least 
$\left\lceil (1-\epsilon)n\right\rceil^{\underline{\left\lceil (1-\epsilon)\nu n\right\rceil}}$
maximum matchings.
\end{theorem}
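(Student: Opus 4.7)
My plan is to follow the blueprint of Theorem~\ref{theorem1} while exploiting the smallness of $\nu$ to establish much tighter stability: for $\nu$ small the Erd\H{o}s--Gallai extremum in~(\ref{e1}) is attained in regime~(i) by $K_{\nu n}\vee\overline{K_{n-\nu n}}$, which has exactly $(n-\nu n)^{\underline{\nu n}}$ maximum matchings, so the aim is to show that near-extremal $G$ retains almost all of them.

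\paragraph{Stability.} I would first prove the following quantitative stability statement: for $h_\nu(\epsilon)$ and $h_\delta(\epsilon,\nu)$ chosen small enough, $G$ contains a set $K\subseteq V(G)$ with $|K|=\nu n$ such that at most $\tfrac{\epsilon}{4}\nu n$ vertices of $K$ have more than $\tfrac{\epsilon}{4}n$ non-neighbours in $V(G)\setminus K$, and at most $\tfrac{\epsilon}{4}n$ vertices of $V(G)\setminus K$ have more than $\tfrac{\epsilon}{4}n$ neighbours inside $V(G)\setminus K$. This is a quantitative sharpening of the uniqueness part of~(\ref{e1}); I would derive it by taking a maximum matching $M$ of $G$, noting that $V(G)\setminus V(M)$ is independent, partitioning $V(M)$ into a ``clique side'' $K$ and an ``independent side'' according to degrees into $V(G)\setminus V(M)$, and bounding all deviations from the join structure by direct edge counting, invoking the Erd\H{o}s--Gallai bound on $G[V(G)\setminus K]$ to bound any extra edges there.

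\paragraph{Counting.} Given such a $K$, let $K^*\subseteq K$ be the vertices of $K$ with at most $\tfrac{\epsilon}{4}n$ non-neighbours in $V(G)\setminus K$, and $L^*\subseteq V(G)\setminus K$ the vertices with at most $\tfrac{\epsilon}{4}n$ neighbours inside $V(G)\setminus K$. Stability gives $|K^*|\geq (1-\tfrac{\epsilon}{4})\nu n$ and, for $\nu\leq h_\nu(\epsilon)$ suitably small, $|L^*|\geq \lceil(1-\epsilon)n\rceil+\tfrac{\epsilon}{4}n$. Fix an ordered subset $(v_1,\dots,v_k)\subseteq K^*$ with $k:=\lceil(1-\epsilon)\nu n\rceil$ and count ordered tuples $(p_1,\dots,p_k)$ of pairwise distinct vertices of $L^*$ with $v_ip_i\in E(G)$ for all $i$. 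At step $i$, at least $|L^*|-(i-1)-\tfrac{\epsilon}{4}n\geq \lceil(1-\epsilon)n\rceil-(i-1)$ valid $p_i$ remain, yielding at least $\lceil(1-\epsilon)n\rceil^{\underline{k}}$ tuples. Each tuple defines a partial matching $M_0=\{v_ip_i\}_{i=1}^k$, which extends to a maximum matching of $G$ by a Hall-type completion of $K\setminus\{v_1,\dots,v_k\}$ into $V(G)\setminus(K\cup V(M_0))$: the bipartite subgraph between them remains very dense because only $k=O(\nu n)$ vertices of $V(G)\setminus K$ have been consumed, while each $K$-vertex in $K^*$ has near-full degree into $V(G)\setminus K$ and the few possibly bad vertices of $K\setminus K^*$ can be absorbed separately using the near-clique structure of $G[K]$. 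Distinct tuples give distinct partial matchings, and since the match of each pivot $v_i$ in any extending maximum matching equals $p_i$, distinct tuples yield distinct maximum matchings, establishing the claimed bound.

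\paragraph{Main obstacle.} The technical heart will be the stability step: the functions $h_\nu$ and $h_\delta$ must be tuned so as to extract a set $K$ of \emph{exactly} $\nu n$ vertices with the desired degree profile, which requires careful edge accounting to convert the slack $h_\delta(\epsilon,\nu) n^2$ into structural control. Once that is in place, the counting and the Hall-type extension will proceed with only routine bookkeeping.
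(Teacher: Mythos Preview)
Your extension step has a genuine gap. You claim every partial matching $M_0=\{v_ip_i\}$ extends to a maximum matching by completing $K\setminus\{v_1,\dots,v_k\}$ into $V(G)\setminus(K\cup V(M_0))$, with bad vertices of $K\setminus K^*$ ``absorbed separately using the near-clique structure of $G[K]$''. But your stability statement says nothing about edges inside $K$, and even if $G[K]$ were complete, matching two $K$-vertices to one another yields only $\nu n-1$ edges, with no compensating edge available in $V(G)\setminus K$ (your stability does not guarantee any). Concretely: take the extremal graph for regime~(i) and delete all but one of the edges from some fixed $v\in K$ to $V(G)\setminus K$, keeping only $vp$. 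Fewer than $n$ edges are removed, the matching number is still $\nu n$, and your stability holds with this $K$; here $v$ is the unique vertex of $K\setminus K^*$ and $p\in L^*$. Since every maximum matching must use the edge $vp$, any tuple with $p_i=p$ for some $i$ fails to extend (the non-pivot $v$ then has no available partner outside $K$, and absorbing it into $K$ loses an edge that cannot be recovered because $V(G)\setminus K$ is independent). So the blanket extension claim is false, and you have not bounded how many tuples are lost.

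The paper avoids this by replacing your ad hoc set $K$ with the Gallai--Edmonds set $A$. From Lemma~\ref{lemma-1} one gets, for $\nu\le\tfrac15$, that $2\delta\ge g(x,y)\ge x(1-\tfrac52\nu)\ge\tfrac{x}{2}$, hence $x\le4\delta$; this yields $a\ge(1-\tfrac{\epsilon}{2})\nu n$ and $k\ge\bigl(1-(1+\tfrac{\epsilon}{2})\nu\bigr)n$. The bipartite graph $H$ between $A$ and a transversal of the components of $G[D]$ is then $K_{a,k}$ minus at most $\delta n^2\le\gamma ak$ edges, and Lemma~\ref{lemma2b} counts its $A$-saturating matchings. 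The crucial difference is that the Gallai--Edmonds structure guarantees \emph{for free} that every such matching extends to a maximum matching of $G$ --- precisely the step where your argument breaks. Your route could probably be repaired (for instance by first fixing matches for the vertices of $K\setminus K^*$ from a reference maximum matching and only then counting tuples that avoid those partners), but as written the extension claim does not hold.
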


Matchings in graphs are among the most well studied topics in graph theory \cite{lopl},
and we would like to mention only few related results.
Computing the permanent of a matrix, and, hence, 
counting the perfect matchings of a given bipartite graph,
is a well known $\#$P-complete problem \cite{va}.
Van der Waerden's proved conjecture on the permanent of a doubly stochastic matrix \cite{eg,fa,gu,scva}
allows to show that $d$-regular bipartite graphs have exponentially many perfect matchings for $d\geq 3$,
and Br\`egman's \cite{br} upper bound on the permanent allows to derive an exponential upper bound.
Another famous related result, establishing a conjecture of Lov\'{a}sz and Plummer,
is due to Esperet, Kardo\v{s}, King, Kr\'{a}l, and Norine \cite{eskakikrno}
who showed that cubic bridgeless graphs have exponentially many perfect matchings.

\section{Auxiliary results and proofs}

Throughout this section, let $G$ be a graph of order $n$, size $m$, and matching number $\nu n$.

A key tool for our approach is the {\it Gallai-Edmonds decomposition} $D\cup A\cup C$ of a graph $G$, cf.~\cite{lopl}, where
$$
D=\{ u\in V(G):\nu(G-u)=\nu(G)\},\,\,\,\,\,\,
A=\bigcup\limits_{u\in D}N_G(u)\setminus D\mbox{, and}\,\,\,\,\,\,
C=V(G)\setminus (D\cup A),
$$
that is, the set $D$ contains the vertices that are not saturated by some maximum matching in $G$,
the set $A$ is the set of neighbors of the vertices in $D$ outside of $D$, and $C$ contains the remaining vertices.
Let the components of $G[D]$ be $G_1,\ldots,G_k$.
Each $G_i$ is {\it factor-critical}, that is,
for every vertex $u$ of $G_i$, the graph $G_i-u$ has a perfect matching.
Let $G_i$ have order $n_i$ for $i\in [k]$, $d=|D|$, $a=|A|$, and $c=|C|$.

Every maximum matching of $G$ consists of
\begin{itemize}
\item a matching of size $(n_i-1)/2$ in $G_i$ for every $i\in [k]$,
\item a matching of size $a$ in the bipartite subgraph of $G$ 
with the partite sets $A$ and $D$ formed by the edges between these two sets,
and 
\item a perfect matching in $G[C]$.
\end{itemize}
In particular, such matchings are guaranteed to exist.
Note that 
\begin{eqnarray}\label{ek}
n-2\nu n=k-a
\end{eqnarray}
is the number of vertices that are not saturated by maximum matchings in $G$.

If $G^*$ arises from $G$ by adding all missing edges 
\begin{itemize}
\item within $V(G_i)$ for each $i\in [k]$,
\item between $D$ and $A$, and 
\item within $A\cup C$,
\end{itemize}
then $G^*$ has the same Gallai-Edmonds decomposition, and, hence, also the same matching number as $G$, and 
\begin{eqnarray}\label{e2}
m(G^*)=\sum\limits_{i=1}^k {n_i\choose 2}+da+{n-d\choose 2}.
\end{eqnarray}
Using $d=n_1+\ldots+n_k$, $n_1,\ldots,n_k\geq 1$, and the convexity of $x\mapsto x^2$, we obtain
\begin{eqnarray}\label{e2b}
m(G^*) \leq m^*:={d-k+1\choose 2}+da+{n-d\choose 2}.
\end{eqnarray}
Since (\ref{e2b}) holds with equality if each but at most one component of $G^*$ is an isolated vertex,
the integer $m^*$ is the size of a graph of order $n$ and matching number $\nu n$,
and (\ref{e1}) implies $m(n,\nu)\geq m^*$.

We introduce the useful variables $x$ and $y$:
\begin{eqnarray}\label{e5}
x=\nu-\frac{a}{n}\in \left[0,\nu\right],\mbox{ and}\,\,\,\,\,\,\,\,\,\, y=\frac{d-k}{n}.
\end{eqnarray}
Note that 
\begin{eqnarray}\label{e6}
k\stackrel{(\ref{ek})}{=}n-2\nu n+a=n-2\nu n+\nu n-xn=(1-\nu-x)n,
\end{eqnarray}
and, hence, 
$$y=\frac{d-k}{n}\leq \frac{n-a-k}{n}=1-\nu+x-1+\nu+x=2x,$$
that is, $y\in \left[0,2x\right]$.

Our first lemma expresses the quadratic part of $m(n,\nu)-m^*$ in terms of $x$ and $y$.
\begin{lemma}\label{lemma-1}
$m(n,\nu)-m^*\geq g(x,y)n^2-n$ for
$$g(x,y):=
\begin{cases}
x\left(1-\nu-\frac{3}{2}x\right)+y(2x-y) & \mbox{, if $\nu \leq \frac{2}{5}-\frac{3}{5n}$ and}\\
(\nu-x)\left(\frac{5}{2}\nu+\frac{3}{2}x-1\right)+y(2x-y) & \mbox{, if $\frac{2}{5}-\frac{3}{5n}<\nu \leq \frac{1}{2}$.}
\end{cases}$$
\end{lemma}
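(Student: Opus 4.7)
The plan is a direct algebraic verification. Using (\ref{e5}) and (\ref{e6}) I first rewrite the Gallai-Edmonds parameters in terms of $\nu,x,y$: namely $a = (\nu-x)n$, $k=(1-\nu-x)n$, $d = (1-\nu-x+y)n$, and $n-d = (\nu+x-y)n$. Substituting into (\ref{e2b}) and expanding the two binomials $\binom{yn+1}{2}$ and $\binom{(\nu+x-y)n}{2}$ puts $m^*$ into the form $A(x,y)\,n^2 + L(x,y)\,n$, where $A(x,y) = \tfrac{y^2}{2} + (1-\nu-x+y)(\nu-x) + \tfrac{(\nu+x-y)^2}{2}$ collects all quadratic contributions and $L(x,y) = \tfrac{1}{2}(2y - \nu - x)$ collects the linear corrections produced by the $+1$ and $-1$ inside the two binomials.

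Next I would expand $m(n,\nu)$ from (\ref{e1}) in each case and subtract. In case~1, $m(n,\nu) = (\nu - \tfrac{\nu^2}{2})n^2 - \tfrac{\nu}{2}n$, and after cancelling the $\nu$, $\nu^2$, and constant pieces of $A(x,y)$ against those of $m(n,\nu)$, the $n^2$-coefficient of $m(n,\nu) - m^*$ collapses to $x(1-\nu-\tfrac{3}{2}x) + y(2x-y)$. In case~2, $m(n,\nu) = 2\nu^2 n^2 + \nu n$, and the same cancellation produces $(\nu-x)(\tfrac{5}{2}\nu + \tfrac{3}{2}x - 1) + y(2x-y)$. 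Verifying these two identities is the main algebraic content; I would do it by fully expanding $(1-\nu-x+y)(\nu-x)$ and $(\nu+x-y)^2$ and reading off the coefficients of $1,\nu,\nu^2,x,\nu x,x^2,y^2,xy,\nu y$ in $A(x,y)$, then matching them term by term against the coefficients coming from $m(n,\nu)$.

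Finally, I would control the residual linear-in-$n$ term, which equals $\tfrac{1}{2}(x - 2y)n$ in case~1 and $\tfrac{1}{2}(3\nu + x - 2y)n$ in case~2. Since $x,\nu \ge 0$ and $y\le 2x\le 2\nu \le 1$ (the bound $y\le 2x$ is the one derived just after (\ref{e6})), each bracketed expression is at least $-2$, so each linear correction is at least $-n$. The only real obstacle is making sure that the long quadratic expansion really does collapse to the stated $g(x,y)$ in both cases; everything else, including the case split at $\nu = \tfrac{2}{5} - \tfrac{3}{5n}$, is routine and lines up with the corresponding split in (\ref{e1}).
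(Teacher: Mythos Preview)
Your proposal is correct and follows essentially the same route as the paper's proof: both substitute $a=(\nu-x)n$, $d-k=yn$, $n-d=(\nu+x-y)n$ into (\ref{e2b}), separate the quadratic and linear contributions of the binomials, identify the quadratic part with $g(x,y)n^2$, and bound the residual linear term by $-n$ using $0\le y\le 2x\le 2\nu\le 1$. The paper simply declares the quadratic identity ``tedious yet straightforward'' without writing out the expansion you sketch, but the argument is the same.
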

\begin{proof}
If $\nu \leq \frac{2}{5}-\frac{3}{5n}$, then 
\begin{eqnarray*}
&& m(n,\nu)-m^*\\[3mm]
& \stackrel{(\ref{e1}),(\ref{e2b})}{=} & \left(\nu n(n-\nu n)+{\nu n\choose 2}\right)-\left({d-k+1\choose 2}+da+{n-d\choose 2}\right)\\
& = & \left(\nu n(n-\nu n)+\frac{(\nu n)^2}{2}\right)-\left(\frac{(d-k)^2}{2}+da+\frac{(n-d)^2}{2}\right)
-\underbrace{\left(\frac{\nu n}{2}+\frac{d-k}{2}-\frac{n-d}{2}\right)}_{\leq n}\\
& \geq & \left(\nu n(n-\nu n)+\frac{(\nu n)^2}{2}\right)-\left(\frac{(d-k)^2}{2}+da+\frac{(n-d)^2}{2}\right)-n\\
& \stackrel{(\ref{e5})}{=} & \left(x\left(1-\nu-\frac{3}{2}x\right)+y(2x-y)\right)n^2-n,
\end{eqnarray*}
where the last equality requires a tedious yet straightforward calculation.

Similarly, if $\frac{2}{5}-\frac{3}{5n}<\nu \leq \frac{1}{2}$, then
\begin{eqnarray*}
&& m(n,\nu)-m^*\\[3mm]
& \stackrel{(\ref{e1}),(\ref{e2b})}{=} & {2\nu n+1\choose 2}-\left({d-k+1\choose 2}+da+{n-d\choose 2}\right)\\
& = & 2(\nu n)^2-\left(\frac{(d-k)^2}{2}+da+\frac{(n-d)^2}{2}\right)
-\underbrace{\left(-\nu n+\frac{d-k}{2}-\frac{n-d}{2}\right)}_{\leq n}\\
& \geq & 2(\nu n)^2-\left(\frac{(d-k)^2}{2}+da+\frac{(n-d)^2}{2}\right)
-n\\
& \stackrel{(\ref{e5})}{=} & \left((\nu-x)\left(\frac{5}{2}\nu+\frac{3}{2}x-1\right)+y(2x-y)\right)n^2-n,
\end{eqnarray*}
where the last equality again requires a tedious yet straightforward calculation.
\end{proof}
The next lemma allows to identify the values of $x$ and $y$ for which $m(n,\nu)-m^*$ is small.

\begin{lemma}\label{lemma1}
Let $\delta$ be such that $\delta\geq \frac{3}{n}$.

If
\begin{eqnarray}\label{e3}
\left(\Big(x\geq \delta\Big)\vee \left(\nu\geq \frac{2}{5}+\frac{2}{5}\delta\right)\right)\wedge 
\left(\Big(x\leq \nu-\delta\Big)\vee \left(\nu\leq \frac{2}{5}-\frac{2}{5}\delta\right)\vee \Big(\delta\leq y\leq 2\nu-3\delta\Big)\right),
\end{eqnarray} 
then $g(x,y)\geq \delta^2$.
\end{lemma}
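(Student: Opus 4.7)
My plan is to carry out a case analysis governed by which clauses of the two conjuncts in \eqref{e3} are satisfied, using throughout that $g$ is a sum of two terms, both of which are non-negative under the ambient constraints. Specifically, $y(2x-y)\geq 0$ holds since $y\in[0,2x]$; the term $x(1-\nu-\tfrac{3}{2}x)$ appearing in Case 1 of the definition of $g$ is non-negative because $x\leq\nu\leq \tfrac{2}{5}$; and the term $(\nu-x)(\tfrac{5}{2}\nu+\tfrac{3}{2}x-1)$ appearing in Case 2 is non-negative since $\nu>\tfrac{2}{5}-\tfrac{3}{5n}$ combined with $\delta\geq \tfrac{3}{n}$ forces $\tfrac{5}{2}\nu+\tfrac{3}{2}x-1\geq 0$. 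I would also note that $\delta\geq \tfrac{3}{n}$ rules out clause (ii) of the second conjunct in Case~2, and forces the first conjunct to be satisfied via $x\geq\delta$ in Case~1.

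Next I would handle each of the three possible clauses of the second conjunct. If (ii) holds, one has $1-\tfrac{5}{2}\nu\geq\delta$, so using $x\leq\nu$ yields $1-\nu-\tfrac{3}{2}x\geq 1-\tfrac{5}{2}\nu\geq\delta$, and together with $x\geq\delta$ this gives $g\geq x(1-\nu-\tfrac{3}{2}x)\geq\delta^2$. If (i) holds, then $\nu-x\geq\delta$, and one checks in each regime that the companion factor is at least $\delta$: in Case~1, $1-\nu-\tfrac{3}{2}x\geq 1-\tfrac{5}{2}\nu+\tfrac{3}{2}\delta\geq \tfrac{3}{2}\delta$ (using $\nu\leq\tfrac{2}{5}$), and in Case~2, $\tfrac{5}{2}\nu+\tfrac{3}{2}x-1\geq\delta$ either from $\nu\geq\tfrac{2}{5}+\tfrac{2}{5}\delta$ directly, or from $x\geq\delta$ combined with the near-equality $\tfrac{5}{2}\nu>1-\tfrac{3}{2n}\geq 1-\tfrac{\delta}{2}$. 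Either way the chosen term of $g$ contributes at least $\delta^2$.

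The remaining and most delicate subcase is when only (iii) holds, that is, $\delta\leq y\leq 2\nu-3\delta$ while (i) and (ii) both fail. The failure of (i) yields $x>\nu-\delta$, hence $2x>2\nu-2\delta>2\nu-3\delta$, so the entire interval $[\delta,2\nu-3\delta]$ lies inside $[0,2x]$ and $y(2x-y)\geq 0$ there. Since $y\mapsto y(2x-y)$ is concave, its minimum on $[\delta,2\nu-3\delta]$ is attained at an endpoint; at $y=\delta$ the value is $\delta(2x-\delta)\geq\delta(2\nu-3\delta)$, and at $y=2\nu-3\delta$ the value is $(2\nu-3\delta)(2x-2\nu+3\delta)\geq(2\nu-3\delta)\delta$. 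Non-vacuity of (iii) forces $2\nu-3\delta\geq\delta$, so both endpoint values are at least $\delta^2$, and combining with the non-negative first term of $g$ yields $g(x,y)\geq\delta^2$.

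The main obstacle is precisely this last subcase: one must verify that the stated $y$-interval sits inside $[0,2x]$ so that $y(2x-y)$ is really non-negative, handle both endpoints of the concave parabola simultaneously, and exploit non-vacuity of (iii) to obtain $2\nu-3\delta\geq\delta$. The remainder of the case analysis is straightforward bookkeeping, with the role of $\delta\geq \tfrac{3}{n}$ being to absorb the $\tfrac{3}{5n}$ slack between the exact Erd\H{o}s-Gallai threshold $\tfrac{2}{5}-\tfrac{3}{5n}$ and the smoothed thresholds $\tfrac{2}{5}\pm\tfrac{2}{5}\delta$ appearing in the hypotheses.
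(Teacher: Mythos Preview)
Your proof is correct and follows essentially the same case analysis as the paper, only organized differently: you split first by which clause of the second conjunct holds and then by the two regimes in the definition of $g$, whereas the paper does it the other way round (and in the clause-(iii) subcase the paper bounds $y\geq\delta$ and $2x-y\geq\delta$ directly rather than via concavity at the endpoints). One minor imprecision: your preliminary claim that $\nu>\tfrac{2}{5}-\tfrac{3}{5n}$ and $\delta\geq\tfrac{3}{n}$ alone force $\tfrac{5}{2}\nu+\tfrac{3}{2}x-1\geq 0$ is not quite true without also invoking the first conjunct of \eqref{e3} (take $x=0$ and $\nu$ just above $\tfrac{2}{5}-\tfrac{3}{5n}$), but you supply exactly that missing argument when treating clause~(i), so the issue is purely presentational.
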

\begin{proof}
First, 
we assume that $\nu \leq \frac{2}{5}-\frac{3}{5n}$, which implies $x\geq \delta$.

If $x\leq \nu-\delta$, then 
$$g(x,y)=
x\left(1-\nu-\frac{3}{2}x\right)+\underbrace{y(2x-y)}_{\geq 0}
\geq \underbrace{x}_{\geq \delta}\underbrace{\left(1-\nu-\frac{3}{2}x\right)}_{\geq \frac{3}{2}\delta}
\geq \delta^2,$$
if $x>\nu-\delta$ and $\nu\leq \frac{2}{5}-\frac{2}{5}\delta$, then 
$$g(x,y)=
x\left(1-\nu-\frac{3}{2}x\right)+\underbrace{y(2x-y)}_{\geq 0}
\geq \underbrace{x}_{\geq \delta}\underbrace{\left(1-\nu-\frac{3}{2}x\right)}_{\geq \delta}
\geq \delta^2,$$
and, if $x>\nu-\delta$ and $\nu>\frac{2}{5}-\frac{2}{5}\delta$, then
$$g(x,y)=
\underbrace{x\left(1-\nu-\frac{3}{2}x\right)}_{\geq 0}+y(2x-y)
\geq \underbrace{y}_{\geq \delta}\underbrace{(2x-y)}_{\geq \delta}
\geq \delta^2.$$
Next, we assume that $\frac{2}{5}-\frac{3}{5n}<\nu\leq \frac{1}{2}$,
which implies $\nu>\frac{2}{5}-\frac{2}{5}\delta$.

Note that 
$$\frac{5}{2}\nu+\frac{3}{2}x-1\geq 
\min\left\{\frac{5}{2}\left(\frac{2}{5}-\frac{3}{5n}\right)+\frac{3}{2}\delta-1,
\frac{5}{2}\left(\frac{2}{5}+\frac{2}{5}\delta\right)-1\right\}\geq \delta.$$
If $x\leq \nu-\delta$, then 
$$g(x,y)=
(\nu-x)\left(\frac{5}{2}\nu+\frac{3}{2}x-1\right)+\underbrace{y(2x-y)}_{\geq 0}
\geq \underbrace{(\nu-x)}_{\geq \delta}\underbrace{\left(\frac{5}{2}\nu+\frac{3}{2}x-1\right)}_{\geq\delta}
\geq \delta^2,$$
and, if $x>\nu-\delta$, then 
$$g(x,y)=
\underbrace{(\nu-x)\left(\frac{5}{2}\nu+\frac{3}{2}x-1\right)}_{\geq 0}+y(2x-y)
\geq \underbrace{y}_{\geq \delta}\underbrace{(2x-y)}_{\geq \delta}
\geq \delta^2.$$
\end{proof}
The next two lemmas establish the existence of many maximum matchings in graphs 
that are close to complete bipartite graphs or complete graphs, respectively.

\begin{lemma}\label{lemma2}
For positive integers $k$ and $a$ with $k>1.1a$, 
let the bipartite graph $H$ 
arise from $K_{a,k}$ with partite sets $A$ and $K$ of orders $a$ and $k$, respectively,
by removing up to $0.08a^2$ edges in such a way that 
$H$ has a matching saturating all vertices in $A$.

The graph $H$ has at least 
$
\left\lceil k-1.1a\right\rceil^{\underline{\min\left\{\left\lceil 0.2a\right\rceil,\left\lceil k-1.1a\right\rceil\right\}}}
$
matchings saturating all vertices in $A$.
\end{lemma}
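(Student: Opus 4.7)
The approach is to generate many matchings saturating $A$ by taking a fixed matching $M^*$ saturating $A$ (guaranteed by hypothesis) and independently rerouting a small subset $S\subseteq A$ of ``free'' vertices, keeping the $M^*$-partners for the remaining vertices of $A$. The set $S$ will consist of high-degree vertices, and the lower bound will follow from a greedy count of the possible partner tuples.

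Call a vertex $v\in A$ \emph{good} if $\deg_H(v)\geq k-0.1a$ and \emph{bad} otherwise. Each bad vertex misses strictly more than $0.1a$ edges, and the total edge deficit is at most $0.08a^2$, so fewer than $0.8a$ vertices are bad, and hence at least $\lceil 0.2a\rceil\geq s:=\min\{\lceil 0.2a\rceil,\lceil k-1.1a\rceil\}$ vertices are good. Writing $M^*=\{vw_v^*:v\in A\}$, we fix any set $S=\{v_1,\ldots,v_s\}$ of $s$ good vertices. For every tuple $(w_1,\ldots,w_s)$ of pairwise distinct vertices of $K$ satisfying $w_i\in N_H(v_i)\setminus\{w_u^*:u\in A\setminus S\}$ for each $i\in[s]$, the edge set $M=\{v_iw_i:i\in[s]\}\cup\{uw_u^*:u\in A\setminus S\}$ is a matching of $H$ that saturates $A$, and distinct tuples yield distinct matchings because the partner of $v_i$ in $M$ is precisely $w_i$.

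It remains to lower-bound the number of such tuples. For each $i\in[s]$ the number of valid choices of $w_i$ is an integer that is at least $\deg_H(v_i)-(a-s)-(i-1)\geq (k-0.1a)-(a-s)-(i-1)=(k-1.1a)+(s-i+1)$, and therefore at least $\lceil k-1.1a\rceil+(s-i+1)$. Multiplying these bounds for $i=1,\ldots,s$ gives $\prod_{j=1}^{s}(\lceil k-1.1a\rceil+j)$, and the factor-by-factor inequality $\lceil k-1.1a\rceil+j\geq\lceil k-1.1a\rceil-j+1$ (valid for all $j\geq 1$) shows that this product dominates the falling factorial $\lceil k-1.1a\rceil^{\underline{s}}$, as required.

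The main obstacle is picking the good/bad threshold in a way that simultaneously yields enough good vertices to populate $S$ and a sufficiently strong degree bound at each step of the greedy count. The choice $0.1a$ is essentially forced: the bound $|A_{\mathrm{bad}}|\leq 0.08a^2/(ta)$ requires $t\geq 0.1$ to leave at least $s$ good vertices, while the per-step guarantee $(k-ta)-(a-s)-(s-1)\geq k-1.1a$ requires $t\leq 0.1+1/a$, so the budget $0.08a^2$ on missing edges and the constant $1.1a$ in the target falling factorial pin the threshold down to essentially a single value.
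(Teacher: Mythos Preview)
Your proof is correct and follows essentially the same approach as the paper's: fix a saturating matching $M^*$, isolate a set of at least $0.2a$ ``good'' vertices in $A$ via an edge-deficit count, match these greedily, and complete with edges of $M^*$. The only cosmetic difference is that the paper defines a vertex $u$ to be good when $|N_H(u)\setminus V(M^*)|\geq k-1.1a$ (and then matches the good vertices entirely into $K\setminus V(M^*)$), whereas you use the degree threshold $\deg_H(v)\geq k-0.1a$ and allow the rerouted partners to reuse the freed-up $M^*$-endpoints of $S$; your rising-factorial-to-falling-factorial comparison at the end is a small extra detail that the paper leaves implicit.
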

\begin{proof}
Let $M$ be a matching in $H$ saturating all vertices in $A$.
If $A$ contains a subset $A'$ of at least $0.2a$ vertices $u$ with $|N_H(u)\setminus V(M)|\geq k-1.1a$,
then there are at least 
$\left\lceil k-1.1a\right\rceil^{\underline{\min\left\{\left\lceil 0.2a\right\rceil,\left\lceil k-1.1a\right\rceil\right\}}}$
matchings that connect the vertices in $A'$ to vertices in $K\setminus V(M)$.
Since each of these matchings can be extended to a matching saturating all vertices in $A$ by using edges from $M$,
the desired statement follows.
Hence, for a contradiction, we suppose that a set $A'$ as above does not exist.
Since $H$ has at most $a^2$ edges within $V(M)$, this implies that 
\begin{eqnarray*}
m(H) & < & a^2+0.2a(k-a)+0.8a(k-a-0.1a)
=ak-0.08a^2,
\end{eqnarray*}
which is a contradiction.
\end{proof}

\begin{lemma}\label{lemma3}
For a positive integer $p$ with $p\geq 10^3$,
let the graph $K$ arise from $K_{2p}$ by removing up to $0.01{p\choose 2}$ many edges
in such a way that $K$ has a perfect matching.

The graph $K$ has at least
$\left\lceil0.447p\right\rceil !
$
perfect matchings.
\end{lemma}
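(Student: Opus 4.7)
The plan is a ``peel and greedy'' argument. Fix a perfect matching $M=\{u_iv_i:i\in[p]\}$ of $K$ and let $F$ denote the set of missing edges, so $|F|\leq 0.01\binom{p}{2}$. With $\beta=0.2$, say that $i\in[p]$ is \emph{good} if at most $\beta p$ edges of $F$ are incident with $\{u_i,v_i\}$. Since $\sum_{i\in[p]}(\deg_F(u_i)+\deg_F(v_i))=2|F|$, the number of bad indices is at most $2|F|/(\beta p)\leq 0.05(p-1)$, so the set $I$ of good indices satisfies $|I|\geq 0.95p+0.05$. Let $V^{*}=\bigcup_{i\in I}\{u_i,v_i\}$; then $|V^{*}|=2|I|$ is even and at least $1.9p$, and every vertex of $V^{*}$ loses at most $\beta p$ neighbors to $F$, so
$$
\delta(K[V^{*}])\geq (2p-1-\beta p)-(2p-|V^{*}|)=|V^{*}|-1-\beta p\geq 1.7p-1.
$$

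Set $q=\lceil 0.447p\rceil$. I would then count perfect matchings of $K$ via a greedy on $K[V^{*}]$: fix any ordering of $V^{*}$ and, for $k=0,1,\ldots,q-1$, pair the smallest still-unmatched vertex $w$ with an arbitrary available neighbor. After $k$ such steps the remaining subgraph has $|V^{*}|-2k$ vertices and minimum degree at least $\delta(K[V^{*}])-2k$; a direct check using $|V^{*}|\leq 2p$, $q\leq 0.447p+1$, and $p\geq 10^{3}$ shows that Dirac's condition $\delta\geq n/2$ is preserved throughout the $q$ peelings (the worst case $k=q$ reduces to $1.7p-1\geq 1.447p+1$). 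Consequently $w$ has at least $\delta(K[V^{*}])-2k$ choices at step $k$, and after the $q$ greedy steps the remaining subgraph still satisfies Dirac's condition, hence has a perfect matching that completes one of $K[V^{*}]$; appending the $M$-edges on $V(K)\setminus V^{*}$ yields a perfect matching of $K$.

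Because the processing order is fixed, two distinct sequences of greedy choices produce partial matchings differing in at least one edge, and hence distinct perfect matchings of $K$. Therefore $K$ has at least
$$
\prod_{k=0}^{q-1}\bigl(\delta(K[V^{*}])-2k\bigr)\;\geq\;\prod_{k=0}^{q-1}(q-k)\;=\;q!\;=\;\lceil 0.447p\rceil!
$$
perfect matchings, where the middle inequality is term-wise and its worst case $k=q-1$ reduces to $\delta(K[V^{*}])\geq 2q-1$, that is $1.7p-1\geq 0.894p+1$, which holds for $p\geq 3$. The main technical step is balancing the choice of $\beta$: small enough that the defect at each vertex of $V^*$ stays well below $|V^*|/2$ (so Dirac's condition survives the $q$ peelings) yet large enough that $V^{*}$ covers almost all of $V(K)$; $\beta=0.2$ is an ample compromise and the hypothesis $p\geq 10^{3}$ comfortably absorbs the remaining numerical slack.
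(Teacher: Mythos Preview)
Your proof is correct, and it proceeds along a genuinely different route from the paper's. The paper fixes a perfect matching $M=\{x_iy_i:i\in[p]\}$ and builds an auxiliary \emph{swap graph} $H$ on $p$ vertices, with $z_i z_j\in E(H)$ iff both cross-edges $x_iy_j,x_jy_i$ survive in $K$; each non-edge of $K$ kills at most one edge of $H$, so $H$ has average degree about $0.989p$, hence a set $X$ of roughly $0.447p$ vertices each with $0.447p$ neighbours outside $X$, giving $\lceil 0.447p\rceil!$ matchings in $H$, and each such matching yields a distinct perfect matching of $K$ by swapping pairs of $M$-edges. You instead stay inside $K$: a Markov-type averaging over the $M$-pairs isolates a large set $V^*$ on which $K$ has minimum degree about $1.7p$, and a fixed-order greedy together with Dirac's theorem produces $\lceil 0.447p\rceil!$ completable partial matchings; your injectivity claim is sound because two greedy runs first diverging at step $j$ match the common pivot $w_j$ to different partners, so any extensions differ at $w_j$. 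Your argument is the more elementary of the two (no auxiliary graph, just degree bookkeeping and Dirac), and the parameter $\beta$ gives visible slack that could be tuned; the paper's swap-graph viewpoint is slicker and encodes the ``two cross-edges present'' condition directly, which is natural for exhibiting perfect matchings as local modifications of $M$.
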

\begin{proof}
Let $M=\{ x_1y_1,\ldots,x_py_p\}$ be a perfect matching in $K$.
Let the graph $H$ with vertex set $\{ z_1,\ldots,z_p\}$
be such that, for distinct indices $i$ and $j$ from $[p]$,
the two vertices $z_i$ and $z_j$ are adjacent in $H$ 
if and only if the two edges $x_iy_j$ and $x_jy_i$ belong to $K$.
Since every non-edge in $K$ leads to at most one non-edge in $H$, we have
$$m(H)\geq {p\choose 2}-0.01{p\choose 2}\geq 0.99{p\choose 2}\stackrel{p\geq 10^3}{\geq} 0.989\frac{p^2}{2},$$ 
which implies that $H$ has average degree at least $0.989p$.
This implies that $H$ contains at least $0.895p$ vertices of degree at least $0.895p$.
Hence, the graph $H$ contains a set $X$ of at least $0.447p$ vertices 
such that each vertex in $X$ has at least $0.447p$ neighbors outside of $X$.
This immediately implies that $H$ contains at least $\left\lceil 0.447p\right\rceil !$ distinct matchings.

For a matching $N$ in $H$,
let $M(N)$ arise from the perfect matching $M$ in $K$ by replacing, for every edge $z_iz_j$ in $N$,
the two edges $x_iy_i$ and $x_jy_j$ in $M$ with the two edges $x_iy_j$ and $x_jy_i$.
Clearly, the set $M(N)$ is a perfect matching in $K$, 
and $M(N)$ is distinct from $M(N')$ for distinct matchings $N$ and $N'$ in $H$.
Therefore, the graph $K$ has at least $\left\lceil 0.447p\right\rceil !$ perfect matchings.
\end{proof}

\begin{proof}[Proof of Theorem \ref{theorem1}]
Let $\nu\in \left(0,\frac{1}{2}\right]$.

Let 
\begin{eqnarray}\label{eeps}
\epsilon & = & \left(\frac{\nu}{50}\right)^2\leq 10^{-4}.
\end{eqnarray}
Let $G$ be a graph of order $n$, size $m$, and matching number $\nu n$
such that $\epsilon n\geq 1$ and $m\geq m(n,\nu)-\epsilon n^2$.
We have
\begin{eqnarray}\label{em-m}
\epsilon n^2\geq m(n,\nu)-m=
\underbrace{(m(n,\nu)-m^*)}_{\geq 0}+
\underbrace{(m^*-m(G^*))}_{\geq 0}+
\underbrace{(m(G^*)-m)}_{\geq 0},
\end{eqnarray}
where we use the notation introduced earlier in this section.

Let $\delta=2\sqrt{\epsilon}$.

Using (\ref{eeps}), $\nu\leq \frac{1}{2}$, and $n\geq \frac{1}{\epsilon}$, 
we obtain 
\begin{eqnarray}\label{edelta}
0<\delta\leq \frac{\nu}{25},\mbox{ and }\,\,\,\,\,\, 
n\geq \max\left\{\frac{3}{\delta},10^4\right\}.
\end{eqnarray}
If (\ref{e3}) holds, then Lemma \ref{lemma-1} and Lemma \ref{lemma1} imply 
$$\epsilon n^2\stackrel{(\ref{em-m})}{\geq} m(n,\nu)-m^*
\geq g(x,y)n^2-n
\geq \underbrace{\delta^2n^2}_{\geq 4\epsilon n^2}-\underbrace{n}_{\leq \epsilon n^2}
\geq 3\epsilon n^2,$$
which is a contradiction.
Hence (\ref{e3}) fails, which leads to the following three cases:
\begin{itemize}
\item {$x<\delta$ and $\nu<\frac{2}{5}+\frac{2}{5}\delta$}.
\item {$x>\nu-\delta$, $\nu>\frac{2}{5}-\frac{2}{5}\delta$, and $y<\delta$}.
\item {$x>\nu-\delta$, $\nu>\frac{2}{5}-\frac{2}{5}\delta$, and $y>2\nu-3\delta$}.
\end{itemize}
Each of these three cases corresponds to a different degeneration of the Gallai-Edmonds decomposition of $G$.
We now consider these cases separately.

\bigskip

\noindent {\bf Case 1} {\it $x<\delta$ and $\nu<\frac{2}{5}+\frac{2}{5}\delta$}.

\bigskip

\noindent Note that $\nu<\frac{2}{5}+\frac{2}{5}\delta\stackrel{(\ref{edelta})}{\leq}\frac{2}{5}+\frac{2}{125}\nu$,
which implies 
\begin{eqnarray}\label{ecase1}
\nu<0.407. 
\end{eqnarray}
By (\ref{e5}) and (\ref{e6}), we have
\begin{eqnarray}
\nu n\geq &a&=(\nu-x)n>(\nu-\delta)n\stackrel{(\ref{edelta})}{\geq} 0.96\nu n,\mbox{ and}\nonumber\\
(1-\nu)n\geq &k&=(1-\nu-x)n>(1-\nu-\delta)n\stackrel{(\ref{edelta}),(\ref{ecase1})}{>}0.576n.\label{eak}
\end{eqnarray}
Let $H_0$ be the bipartite subgraph of $G$ 
with the partite sets $A$ and $D$ formed by the edges between these two sets.
Let $M$ be a matching of size $a$ in $H_0$ that is a subset of some maximum matching in $G$.
In particular, for every $i\in [k]$, there is at most one edge in $M$ between $A$ and $V(G_i)$.
Let $H$ arise from $H_0$ by removing, for every component $G_i$ of $G[D]$, 
all but exactly one vertex in such a way that $V(M)\subseteq V(H)$.
The properties of the Gallai-Edmonds decomposition imply that every matching in $H$
that saturates all vertices in $A$ can be extended to a maximum matching in $G$.
By (\ref{em-m}), we have $m(G^*)-m\leq \epsilon n^2$, 
and, hence, 
the graph $H$ arises from $K_{a,k}$ with partite sets $A$ and $K$ of orders $a$ and $k$, respectively,
by removing up to $\epsilon n^2$ edges in such a way that $\nu(G)=a$.
Since $k\stackrel{(\ref{ecase1}),(\ref{eak})}{>}1.1a$ and 
$0.08a^2\stackrel{(\ref{eak})}{\geq}0.08\cdot 0.96^2\nu^2 n^2\stackrel{(\ref{eeps})}{\geq}\epsilon n^2$,
Lemma \ref{lemma2} implies that $H$ has at least 
$\left\lceil k-1.1a\right\rceil^{\underline{\min\left\{\left\lceil 0.2a\right\rceil,\left\lceil k-1.1a\right\rceil\right\}}}$
matchings saturating all vertices in $A$.
Since
$k-1.1a\stackrel{(\ref{ecase1}),(\ref{eak})}{>}0.1n\geq 0.1\nu n$
and
$0.2a\stackrel{(\ref{eak})}{>}0.1\nu n$,
the desired statement follows in this case.

\bigskip

\noindent {\bf Case 2} {\it $x>\nu-\delta$, $\nu>\frac{2}{5}-\frac{2}{5}\delta$, and $y<\delta$}.

\bigskip

\noindent We have $\nu>\frac{2}{5}-\frac{2}{5}\delta\stackrel{(\ref{edelta})}{\geq}\frac{2}{5}-\frac{2}{125}\nu$,
which implies 
\begin{eqnarray}\label{ecase2}
\nu>0.393. 
\end{eqnarray}
Furthermore,
\begin{eqnarray*}
|C| & = & n-d-a
=n-(d-k)-k+(\nu n-a)-\nu n\\
&\stackrel{(\ref{e5}),(\ref{e6})}{=}&\left(1-y-1+\nu+x+x-\nu\right)n\\
& = & (2x-y)n
>(2\nu-3\delta)n
\stackrel{(\ref{edelta})}{\geq}\left(2-\frac{3}{25}\right)\nu n
\stackrel{(\ref{ecase2})}{\geq} 0.738n.
\end{eqnarray*}
If $|C|=2p$, then $p\geq 0.369n\stackrel{(\ref{edelta})}{\geq}10^3$, 
and, hence, $p-1\geq 0.368n$.
Note that $G[C]$ arises from $K_{2p}$ 
by removing at most $m(G^*)-m\stackrel{(\ref{em-m})}{\leq} \epsilon n^2$ many edges.
Since 
$0.01{p\choose 2}
\geq \frac{0.01\cdot 0.368^2}{2}n^2
\stackrel{(\ref{eeps})}{\geq}\epsilon n^2$,
and every perfect matching in $G[C]$ can be extended to a maximum matching in $G$,
Lemma \ref{lemma3} implies that $G$ has at least
$\left\lceil 0.447p\right\rceil !\geq \left\lceil 0.1n\right\rceil !$ 
maximum matchings, and the desired statement follows in this case.

\bigskip

\noindent {\bf Case 3} {\it $x>\nu-\delta$, $\nu>\frac{2}{5}-\frac{2}{5}\delta$, and $y>2\nu-3\delta$}.

\bigskip

\noindent Exactly as in Case 2, we obtain (\ref{ecase2}).
Furthermore,
\begin{eqnarray*}
|D| & \geq & |D|-k
=d-k
\stackrel{(\ref{e5})}{=}yn
>(2\nu-3\delta)n
\stackrel{(\ref{edelta})}{\geq}\left(2-\frac{3}{25}\right)\nu n
\stackrel{(\ref{ecase2})}{\geq} 0.738n.
\end{eqnarray*}
Suppose, for a contradiction, that $\max\{ n_1,\ldots,n_k\}\leq 0.54n$.
Using the convexity of $x\mapsto x^2$, we obtain
\begin{eqnarray*}
\epsilon n^2 &\stackrel{(\ref{em-m})}{\geq} & m^*-m(G^*)
\stackrel{(\ref{e2}),(\ref{e2b})}{=} {d-k+1\choose 2}-\sum\limits_{i=1}^k{n_i\choose 2}
>\frac{(d-k)^2}{2}-\frac{1}{2}\sum\limits_{i=1}^k n_i^2\\
&\geq & \frac{1}{2}\left(0.738n\right)^2-\frac{1}{2}\left(\frac{n}{0.54n}\right)\left(0.54n\right)^2
\geq 0.002n^2,
\end{eqnarray*}
contradicting (\ref{eeps}). 
Hence, we may assume that $n_1\geq 0.54n\stackrel{(\ref{edelta})}{\geq}10^3$.
For $p=\frac{n_1-1}{2}$, 
we obtain $p\geq 0.269n\stackrel{(\ref{edelta})}{\geq} 10^3$ and
$p-1\stackrel{(\ref{edelta})}{\geq} 0.268n$.
Note that removing any one vertex from $G_1$, 
we obtain a graph $K$ that arises from $K_{2p}$ 
by removing at most $m(G^*)-m\stackrel{(\ref{em-m})}{\leq} \epsilon n^2$ many edges.
Furthermore, by the properties of the Gallai-Edmonds decomposition,
the graph $K$ has a perfect matching,
and every perfect matching in $K$ can be extended to a maximum matching in $G$.
Since 
$0.01{p\choose 2}
\geq \frac{0.01\cdot 0.268^2}{2}n^2
\stackrel{(\ref{eeps})}{\geq}\epsilon n^2$,
Lemma \ref{lemma3} implies that $G$ has at least
$\left\lceil 0.447p\right\rceil !\geq \left\lceil0.1n\right\rceil !$ 
maximum matchings, and the desired statement follows in this case.

This completes the proof.
\end{proof}
For the proof of Theorem \ref{theorem2}, we need 
the following variant of Lemma \ref{lemma2}.

\begin{lemma}\label{lemma2b}
Let $\epsilon,\gamma \in (0,1)$.
For positive integers $k$ and $a$ with $\frac{\epsilon k}{2}\geq a$, 
let the bipartite graph $H$ 
arise from $K_{a,k}$ with partite sets $A$ and $K$ of orders $a$ and $k$, respectively,
by removing up to $\gamma ak$ edges in such a way that 
$H$ has a matching saturating all vertices in $A$.

The graph $H$ has at least 
$
\left\lceil \left(1-\sqrt{\gamma}-\frac{\epsilon}{2}\right)k\right\rceil^{\underline{\left\lceil \left(1-\sqrt{\gamma}-\frac{\epsilon}{2}\right)a\right\rceil}}
$
matchings saturating all vertices in $A$.
\end{lemma}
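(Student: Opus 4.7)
The plan is to mimic the proof of Lemma \ref{lemma2}, replacing the hard-coded thresholds with the new parameters $\gamma$ and $\epsilon$. Fix a matching $M$ in $H$ saturating $A$, and call a vertex $u\in A$ \emph{rich} if
$|N_H(u)\setminus V(M)|\geq \left(1-\sqrt{\gamma}-\frac{\epsilon}{2}\right)k$. Let $A'$ be the set of rich vertices. I would first argue by contradiction that $|A'|\geq \left(1-\sqrt{\gamma}-\frac{\epsilon}{2}\right)a$.

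If instead $|A\setminus A'|>\left(\sqrt{\gamma}+\frac{\epsilon}{2}\right)a$, the hypothesis $a\leq \frac{\epsilon}{2}k$ yields $|K\setminus V(M)|=k-a\geq \left(1-\frac{\epsilon}{2}\right)k$, so each non-rich vertex misses strictly more than $\left(1-\frac{\epsilon}{2}\right)k-\left(1-\sqrt{\gamma}-\frac{\epsilon}{2}\right)k=\sqrt{\gamma}\,k$ edges to $K\setminus V(M)$. Summing over $A\setminus A'$, the total number of missing edges of $H$ exceeds $\left(\sqrt{\gamma}+\frac{\epsilon}{2}\right)a\cdot \sqrt{\gamma}\,k=\gamma ak+\frac{\epsilon\sqrt{\gamma}}{2}ak>\gamma ak$, contradicting the hypothesis that at most $\gamma ak$ edges were removed.

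With $A'$ large, I would order a subset $u_1,\ldots,u_t$ of $A'$ of size $t=\left\lceil \left(1-\sqrt{\gamma}-\frac{\epsilon}{2}\right)a\right\rceil$ and greedily match $u_j$ to a previously unused neighbor in $K\setminus V(M)$. Since each $u_j$ has at least $\left\lceil \left(1-\sqrt{\gamma}-\frac{\epsilon}{2}\right)k\right\rceil$ neighbors in $K\setminus V(M)$, at step $j$ at least $\left\lceil \left(1-\sqrt{\gamma}-\frac{\epsilon}{2}\right)k\right\rceil-(j-1)$ choices remain, and this stays positive throughout because $a\leq k$ forces $t\leq \left\lceil \left(1-\sqrt{\gamma}-\frac{\epsilon}{2}\right)k\right\rceil$. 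The greedy process therefore produces the advertised $\left\lceil \left(1-\sqrt{\gamma}-\frac{\epsilon}{2}\right)k\right\rceil^{\underline{\left\lceil \left(1-\sqrt{\gamma}-\frac{\epsilon}{2}\right)a\right\rceil}}$ distinct partial matchings. Each extends to a matching of $H$ saturating all of $A$ by adjoining the $M$-edges incident to the vertices in $A\setminus\{u_1,\ldots,u_t\}$; the $K$-endpoints used by the partial matching lie in $K\setminus V(M)$ while the $M$-extension uses vertices in $V(M)\cap K$, so the union is a valid matching, and distinct partial matchings yield distinct full matchings.

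I do not foresee a real obstacle: the proof is essentially a parametrised version of Lemma \ref{lemma2}. The only point requiring attention is the strict inequality in the non-edge count, which is precisely what motivates the $\frac{\epsilon}{2}$ slack appearing both in the hypothesis $a\leq \frac{\epsilon}{2}k$ and in the exponent $1-\sqrt{\gamma}-\frac{\epsilon}{2}$ of the conclusion.
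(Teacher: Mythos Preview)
Your argument is correct and follows essentially the same route as the paper's proof. The only cosmetic difference is that the paper thresholds on the total degree in $K$ (calling $u$ ``good'' if $\deg_H(u)\geq (1-\sqrt{\gamma})k$, showing at least $(1-\sqrt{\gamma})a$ vertices are good, and then subtracting $|V(M)\cap K|=a\leq \frac{\epsilon}{2}k$), whereas you threshold directly on $|N_H(u)\setminus V(M)|$; the resulting counting of missing edges and the greedy extension are identical.
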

\begin{proof}
Let $M$ be a matching in $H$ saturating all vertices in $A$.
If $A$ contains a subset $A'$ of at least $\left(1-\sqrt{\gamma}\right)a$ vertices 
of degree at least $\left(1-\sqrt{\gamma}\right)k$,
then each vertex $u$ in $A'$ satisfies 
$|N_H(u)\setminus V(M)|\geq \left(1-\sqrt{\gamma}\right)k-a\geq \left(1-\sqrt{\gamma}-\frac{\epsilon}{2}\right)k$,
and there are at least 
$\left\lceil \left(1-\sqrt{\gamma}-\frac{\epsilon}{2}\right)k\right\rceil^{\underline{\left\lceil \left(1-\sqrt{\gamma}-\frac{\epsilon}{2}\right)a\right\rceil}}$
matchings that connect the vertices in $A'$ to vertices in $K\setminus V(M)$.
Since each of these matchings can be extended to a matching saturating all vertices in $A$ by using edges from $M$,
the desired statement follows.
Hence, for a contradiction, we suppose that a set $A'$ as above does not exist.
This implies that 
$m(H)<\left(1-\sqrt{\gamma}\right)ak+\sqrt{\gamma}a\left(1-\sqrt{\gamma}\right)k=ak-\gamma ak$,
which is a contradiction.
\end{proof}

\begin{proof}[Proof of Theorem \ref{theorem2}]
Let $\epsilon\in (0,1)$.
Let $\nu\in \left(0,\frac{1}{2}\right]$ and $\delta\in (0,1)$ be such that 
\begin{eqnarray}\label{ehnu}
\nu &\leq & \min\left\{\frac{1}{5},\frac{\epsilon}{2\left(1+\frac{\epsilon}{2}\left(1+\frac{\epsilon}{2}\right)\right)}\right\},\,\,\,\,\,\,\mbox{ and }\,\,\,\,\,\,
\delta\leq \frac{\epsilon\nu}{8}.
\end{eqnarray}
Further restricting $\nu$ (in terms of $\epsilon$) and $\delta$ (in terms of $\epsilon$ and $\nu$), we assume that 
\begin{eqnarray}\label{ehnu2}
1-\left(1+\frac{\epsilon}{2}\right)\nu & \geq & 1-\frac{\epsilon}{2},\,\,\,\,\mbox{ and }\,\,\,\,
\left(1-\sqrt{\frac{\delta}{\left(1-\frac{\epsilon}{2}\right)\nu\left(1-\left(1+\frac{\epsilon}{2}\right)\nu\right)}}-\frac{\epsilon}{2}\right)\left(1-\frac{\epsilon}{2}\right)
\geq 1-\epsilon.
\end{eqnarray}
Let $G$ be a graph of order $n$, size $m$, and matching number $\nu n$ such that 
$\delta n\geq 1$ and $m\geq m(n,\nu)-\delta n^2$.
Using the notation introduced before in this section, Lemma \ref{lemma-1} implies
\begin{eqnarray*}
2\delta
&\geq & \frac{1}{n^2}\left(m(n,\nu)-m^*+n\right)
\geq g(x,y)
\geq x\left(1-\nu-\frac{3}{2}x\right)
\geq x\left(1-\frac{5}{2}\nu\right)
\geq \frac{x}{2}.
\end{eqnarray*}
Hence, $x\leq 4\delta$, and
\begin{eqnarray}
\nu n\geq &a&=(\nu-x)n\geq (\nu-4\delta)n\stackrel{(\ref{ehnu})}{\geq} \left(1-\frac{\epsilon}{2}\right)\nu n,\mbox{ and}\nonumber\\
&k&=(1-\nu-x)n\geq (1-\nu-4\delta)n\stackrel{(\ref{ehnu})}{\geq} \left(1-\left(1+\frac{\epsilon}{2}\right)\nu\right) n.\label{eak2}
\end{eqnarray}
Using the second upper bound on $\nu$ in (\ref{ehnu}), this implies $\frac{\epsilon}{2} k\geq a$.
Let the graph $H$ be defined as in Case 1 of the proof of Theorem \ref{theorem1}.
For $\gamma=\frac{\delta}{\left(1-\frac{\epsilon}{2}\right)\nu\left(1-\left(1+\frac{\epsilon}{2}\right)\nu\right)}$,
we obtain $\gamma ak\stackrel{(\ref{eak2})}{\geq}\delta n^2$, and 
Lemma \ref{lemma2b} implies that $H$, and, hence, also $G$, has at least 
$\left\lceil \left(1-\sqrt{\gamma}-\frac{\epsilon}{2}\right)k\right\rceil^{\underline{\left\lceil \left(1-\sqrt{\gamma}-\frac{\epsilon}{2}\right)a\right\rceil}}$
maximum matchings.
Now,
\begin{eqnarray*}
\left(1-\sqrt{\gamma}-\frac{\epsilon}{2}\right)k 
&\stackrel{(\ref{eak2})}{\geq}&\left(1-\sqrt{\gamma}-\frac{\epsilon}{2}\right)\left(1-\left(1+\frac{\epsilon}{2}\right)\nu\right) n
\stackrel{(\ref{ehnu2})}{\geq}\left(1-\sqrt{\gamma}-\frac{\epsilon}{2}\right)\left(1-\frac{\epsilon}{2}\right)n
\stackrel{(\ref{ehnu2})}{\geq} (1-\epsilon)n,\mbox{ and }\\
\left(1-\sqrt{\gamma}-\frac{\epsilon}{2}\right)a
&\stackrel{(\ref{eak2})}{\geq}& \left(1-\sqrt{\gamma}-\frac{\epsilon}{2}\right)\left(1-\frac{\epsilon}{2}\right)\nu n
\stackrel{(\ref{ehnu2})}{\geq}(1-\epsilon)\nu n,
\end{eqnarray*}
which completes the proof.
\end{proof}

\end{document}